\theoremstyle{plain}
\newtheorem{theorem}{Theorem}
\newtheorem{lemma}[theorem]{Lemma}
\newtheorem{proposition}[theorem]{Proposition}
\newtheorem{corollary}[theorem]{Corollary}
\newtheorem*{WISCinSet}{WISC (in $\set$)}
\theoremstyle{definition}
\newtheorem{definition}[theorem]{Definition}
\theoremstyle{remark}
\newtheorem{example}[theorem]{Example}
\newtheorem{remark}[theorem]{Remark}
\newcommand{\NN}{\mathbb{N}}
\newcommand{\NNplus}{\mathbb{N_+}}
\newcommand{\ZZ}{\mathbb{Z}}
\newcommand{\cC}{\mathcal{C}}
\newcommand{\cF}{\mathcal{F}}
\newcommand{\cG}{\mathcal{G}}
\newcommand{\into}{\hookrightarrow}    
\newcommand\onto{\twoheadrightarrow}    
\newcommand{\set}{\bm{\mathrm{set}}}
\newcommand{\Grp}{\bm{\mathrm{Grp}}}
\newcommand{\Top}{\bm{\mathrm{Top}}}
\newcommand{\id}{\mathrm{id}}
\newcommand{\pr}{\mathrm{pr}}
\newcommand{\ZON}{\mathcal{Z}}
\DeclareMathOperator{\im}{im}
\DeclareMathOperator{\Obj}{Obj}
\DeclareMathOperator{\Mor}{Mor}
\DeclareMathOperator{\lcm}{lcm}
\DeclareMathOperator{\dom}{dom}
\DeclareMathOperator{\Stab}{Stab}
\begin{document}

\title{\rmfamily\normalfont 
The weak choice principle WISC may fail in the category of sets}
\author{\spacedlowsmallcaps{David Michael Roberts}\footnote{Supported by the Australian Research Council 
(grant number DP120100106).
This paper will appear in the journal \emph{Studia Logica}.}
\\\small{\texttt{david.roberts@adelaide.edu.au}}}
\date{} 
\maketitle

\begin{abstract}
The set-theoretic axiom WISC states that for every set there is
a \emph{set} of surjections to it cofinal in \emph{all} such surjections. By
constructing an unbounded topos over the category of sets and using an
extension of the internal logic of a topos due to Shulman, we show that
WISC is independent of the rest of the axioms of the set
theory given by a well-pointed topos. This also gives an example of a topos
that is not a predicative topos as defined by van den Berg.
\end{abstract}

\section{Introduction}

Well-known from algebra is the concept of a \emph{projective object}: in a
finitely complete category this is an object $P$ such that any epimorphism
with codomain $P$ splits. The axiom of choice (AC) can be stated as saying
that every set is projective in the category of sets. Various constructive set
theories seek to weaken this, and in particular the axiom known as PAx
(Presentation Axiom) \cite{Aczel_78} or CoSHEP (Category of Sets Has Enough
Projectives) asks merely that every set $X$ has an epimorphism $P\onto X$
where $P$ is a projective set. Many results that seem to rely on the axiom of
choice, such as the existence of enough projectives in module categories, may
be proved instead with PAx. As a link with a more well-known axiom, PAx imples
the axiom of dependent choice.

There is, however, an even weaker option, here called WISC (to be explained momentarily). 
Consider the full subcategory $Surj/X \into \set/X$ of surjections with codomain $X$, in some category $\set$ of sets; clearly it is a large category. 
Then PAx implies the statement that $Surj/X$ has a \emph{weakly initial object}, namely an object with a map to any other object, not necessarily unique (the axiom of choice says $\id_X\colon X\to X$ is weakly initial in $Surj/X$). 
Another way to think of the presentation axiom is that for every set $X$ there is a `cover' $P \onto X$ such that any surjection $Y\onto P$ splits.

The axiom WISC (Weakly Initial Set of Covers), due to Toby Bartels and Mike Shulman, asks merely that the category $Surj/X$ has a weakly initial \emph{set}, for every $X$. 
This is a set $I_X$ of objects (that is, of surjections to $X$) such that for any other object (surjection), there is a map from \emph{some} object in $I_X$.
To continue the geometric analogy, this is like asking that there is a set of covers of any $X$ such that each surjection $Y\onto X$ splits locally over at least one cover in that set. 
An example implication of WISC is that the cohomology $H^1(X,G)$ defined by Blass in \cite{Blass_83} is indeed a set. 
The assertion that $H^1(X,G)$ is a proper class seems to be strictly weaker than $\neg$WISC, but to the author's knowledge no models have yet been produced
where this is the case.

The origin of the axiom WISC (see \cite{Roberts_12}) was somewhat geometric in flavour but the question naturally arises whether toposes, and in particular the category of sets, can fail to satisfy WISC. 
A priori, there is no particular reason why WISC should hold, so the burden is to supply an example where it fails. 
It goes without saying that neither AC nor PAx can hold in such an example.

The first result in this direction was from van den Berg (see \cite{vdBerg-Moerdijk_14}%
\footnote{In that paper, WISC is used in a guise of an
equivalent axiom called AMC, the Axiom of Multiple Choice. To avoid confusion
with other axioms with that name, this paper sticks with the term `'WISC'.})
who proved that WISC implies the existence of a proper class of regular cardinals, and so WISC must fail in Gitik's model of ZF \cite{Gitik_80}.
This model is constructed assuming the existence of a proper class of certain large cardinals, and it has no regular cardinals bigger than $\aleph_0$.
Working in parallel to the early development of the current paper, Karagila \cite{Karagila_14} gave a model of ZF in which there is a proper class of incomparable sets (sets with no injective resp.\ surjective functions between them) surjecting onto the ordinal $\omega$. 
This gave a large-cardinal-free proof that WISC was independent of the ZF axioms, answering a question raised by van den Berg.

The current paper started as an attempt to also give, via category-theoretic methods, a large-cardinals-free proof of the independence of WISC from ZF. 
Since the  release of \cite{Karagila_14}, this point is moot as far as independence from ZF goes. 
However, the proof in \cite{Karagila_14} relies on a symmetric submodel of a class-forcing model, which is rather heavy machinery. 
Thus this paper, while proving a slightly weaker result, does so with, in the opinion of the author, far less.

The approach we take is to consider the negation of WISC in the \emph{internal logic} of a (boolean) topos. 
This allows us to interpret the theory of a well-pointed topos together with $\neg$WISC. 
However, since this internal version of WISC holds in any Grothendieck topos (assuming for example AC in the base topos of sets) \cite{vdBerg-Moerdijk_14}, we necessarily consider a \emph{non-bounded} topos over the base topos of sets (recall that boundedness of a topos is equivalent to it being a Grothendieck topos). 
In fact the topos we consider is a variant on the `faux topos' mentioned in \cite[IV 2.8]{SGA4} (wherein `topos' meant what we now call a Grothendieck topos).

The reader familiar with such things may have already noticed that WISC or its negation is not the sort of sentence that can be written via the usual Kripke-Joyal semantics (see e.g.~\cite[\S VI.6]{MacLane-Moerdijk}) used for internal logic, as it contains unbounded quantifiers. 
As a result, we will be using an extension called the \emph{stack semantics},  given by Shulman \cite{Shulman_10}, that permits their use. 
The majority of the proof is independent  of the details of the stack semantics, which are only used to translate WISC from a statement in a well-pointed topos to a general topos (in fact a locally connected  topos, as this is the only case we will consider).

To summarise: starting from a well-pointed topos with natural number object we give a proper-class-sized group $\ZON$ equipped with a certain topology, and consider the topos $\ZON\set$ of sets with a continuous action of this group. 
Of course, the preceeding sentence needs to be formalised appropriately, and we do this in terms of a base well-pointed topos and a large diagram of groups therein.
We reduce the failure of WISC in the internal logic of $\ZON\set$ to simple group-theoretic statements. 
It should be pointed out that classical logic is used throughout, and all the toposes in this note are boolean.

Finally, the topos constructed as in the previous paragraph is not a \emph{predicative topos} as defined in \cite{vandenBerg_12}. 
These are analogues of toposes that should capture predicative mathematics, as toposes capture the notion of intuitionistic mathematics. 
This apparent failure is understood and carefully discussed in \emph{loc.~cit.}; the example given in this paper is hopefully of use as a foil in the development of predicative toposes.

The author's thanks go to Mike Shulman for helpful and patient discussions regarding the stack semantics.
Thanks are also due to an anonymous referee who found an earlier version of this paper contained some critical errors.

\section{WISC in the internal language}

We use the following formulation of WISC, equivalent to the usual statement in a well-pointed topos and due to Fran\c{c}ois Dorais \cite{Dorais_MO}.

\begin{WISCinSet}
  For every set $X$ there is a set $Y$ such that for every surjection  $q\colon Z\to X$ there is a map $s\colon Y \to Z$ such that $q\circ s \colon Y\to X$ is a surjection.
\end{WISCinSet}

The aim of this paper is to show that an internal version of $\neg$WISC is valid in the (non-well-pointed) topos constructed in section \ref{sec:construction} below. 
The internal logic of a topos, in the generality required here, is given by the \emph{stack semantics}.
We refer to \cite[section 7]{Shulman_10} for more details on the stack semantics, recalling purely what is necessary for the translation of WISC into the internal logic of a topos $S$ (Shulman takes weaker assumptions on $S$, but this extra generality is not needed here).

If $U$ is an object of $S$ we say that a formula of category theory $\phi$ with parameters in the category $S/U$ is a \emph{formula over $U$}. 
We have%
\footnote{Technically, this is only after choosing a splitting of the fibred category $S^\mathbf{2} \to S$, but in practice one only deals with a finite number of instances so this can be glossed over.}
the base change functor $p^*\colon S/U\to S/V$ for any map $p\colon V\to U$, and call the formula over $V$ given by replacing each parameter of $\phi$ by its image under $p^*$ the \emph{pullback} of $\phi$ (denoted $p^*\phi$). 
Note that the language of category theory is taken to be two-sorted, so there are quantifiers for both objects and arrows separately. Here and later $\onto$ denotes a map that is an epimorphism.

\begin{definition}{(Shulman \cite{Shulman_10})}\label{def:stack_semantics}
  Given the topos $S$, and a sentence $\phi$ over $U$, we define the relation $U\Vdash \phi$ recursively as follows
    \begin{itemize}
    \item
      $U\Vdash (f=g) \leftrightarrow f = g$
    \item
      $U\Vdash \top$ always
    \item
      $U\Vdash \bot \leftrightarrow U \simeq 0$
    \item
      $U\Vdash (\phi \wedge \psi) \leftrightarrow U\Vdash \phi$ and $U\Vdash \psi$
    \item
      $U\Vdash (\phi \vee \psi) \leftrightarrow U = V\cup W$, where 
      $i\colon V\into U$ and $j\colon W\into U$ are subobjects such that 
      $V\Vdash i^*\phi$ and $W\Vdash j^*\psi$
    \item
      $U\Vdash (\phi\Rightarrow \psi) \leftrightarrow$ for any $p\colon V\to U$
      such that $V\Vdash p^*\phi$, also $V\Vdash p^*\psi$
    \item
      $U\Vdash \neg \phi \leftrightarrow U\Vdash (\phi\Rightarrow \bot)$
    \item
      $U\Vdash (\exists X)\phi(X) \leftrightarrow 
      \exists p\colon V\twoheadrightarrow U$ and $A\in \Obj(S/V)$ 
      such that $V\Vdash p^*\phi(A)$
    \item
      $U\Vdash (\exists f\colon A\to B)\phi(f)\leftrightarrow 
      \exists p\colon V\twoheadrightarrow U$ and  
      $g\colon p^*A \to p^*B \in \Mor(S/V)$ such that $V\Vdash p^*\phi(g)$
    \item
      $U\Vdash (\forall X)\phi(X) \leftrightarrow$ for any $p\colon V\to U$ and 
      $A\in \Obj(S/V)$, $V\Vdash p^*\phi(A)$
    \item
      $U\Vdash (\forall f\colon A\to B)\phi(f)\leftrightarrow$ for any 
      $p\colon V\to U$ and $j\colon p^*A \to p^*B \in \Mor(S/V)$, 
      $V\Vdash p^*\phi(j)$
    \end{itemize}
    If $\phi$ is a formula over $1$ we say $\phi$ is \emph{valid} if $1\Vdash \phi$.
\end{definition}

Comparing with \cite[\S VI.6]{MacLane-Moerdijk} one can recognise the Kripke-Joyal semantics as a fragment of the above, where attention is restricted to monomorphisms rather than arbitrary objects in slice categories, and all quantifiers are bounded.

Since our intended model will be built using not just an arbitrary topos, but a locally connected and cocomplete one, the following lemma will simplify working in the internal logic. 
The proof follows that of lemma 7.3 in \cite{Shulman_10}. 
We recall that a locally connected topos $E$ is a topos over $\set$ with an additional left adjoint $\pi_0$ to the inverse image part of the global section functor, and an object $A$ is called \emph{connected} if $\pi_0(A)=1$.

\begin{lemma}\label{loc_conn_topos_forall}
  Let $E$ be a locally connected cocomplete topos. 
  Then then if for any \emph{connected} object $V$, arrow $p\colon V\to U$ and $A\in \Obj(S/V)$ we have $V\Vdash p^*\phi(A)$, then $U\Vdash (\forall X)\phi(X)$.
\end{lemma}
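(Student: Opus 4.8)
The plan is to verify the defining condition of the universal quantifier directly, reducing the general case to the connected case supplied by the hypothesis. By Definition~\ref{def:stack_semantics}, to establish $U \Vdash (\forall X)\phi(X)$ I must show that $W \Vdash q^*\phi(B)$ for \emph{every} object $W$ of $E$, every arrow $q\colon W \to U$, and every $B \in \Obj(E/W)$. The hypothesis provides this only when $W$ is connected, so the entire content of the lemma is to pass from connected test objects to arbitrary ones.

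First I would decompose $W$ into connected pieces. Since $E$ is locally connected, $\pi_0(W)$ is a set and the canonical map $W \to \Delta\pi_0(W)$ (the unit of $\pi_0 \dashv \Delta$, with $\Delta\colon \set \to E$ the inverse image functor of the geometric morphism to $\set$) exhibits $W$ as a coproduct $W \cong \coprod_{i \in \pi_0(W)} V_i$ of its connected components $V_i$; cocompleteness of $E$ is exactly what guarantees that this set-indexed coproduct exists. Write $\iota_i \colon V_i \into W$ for the coproduct injections, and set $p_i := q \circ \iota_i \colon V_i \to U$ and $A_i := \iota_i^* B \in \Obj(E/V_i)$. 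Each $V_i$ is connected, so the hypothesis applies and yields $V_i \Vdash p_i^*\phi(A_i)$. By functoriality of the base change of formulas one has $p_i^* = \iota_i^* q^*$, and since substitution of the parameter commutes with pullback, $p_i^*\phi(A_i) = \iota_i^*\big(q^*\phi(B)\big)$; writing $\psi := q^*\phi(B)$, I have obtained $V_i \Vdash \iota_i^*\psi$ for every $i$.

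The conclusion then follows from a local character (descent) property of the stack semantics: if $\{\iota_i \colon V_i \to W\}$ is a jointly epimorphic family and $V_i \Vdash \iota_i^*\psi$ for all $i$, then $W \Vdash \psi$. The coproduct injections are jointly epic, so applying this to $\psi = q^*\phi(B)$ gives $W \Vdash q^*\phi(B)$; as $W$, $q$ and $B$ were arbitrary, $U \Vdash (\forall X)\phi(X)$ holds. The decomposition and the functoriality bookkeeping above are routine, so I expect the main obstacle to be the local character property itself — precisely the ingredient established in the proof of Lemma~7.3 of \cite{Shulman_10} by structural induction on $\phi$. The delicate inductive cases are the positive connectives $\vee$, $\exists X$ and $\exists f$, where the witnessing subobjects and covers over the individual $V_i$ must be assembled into a single witness over $W$; here the fact that the $V_i$ are genuine disjoint summands (so that $W$ is literally their coproduct, not merely covered by them) makes this gluing canonical and removes the compatibility conditions one would otherwise face for a general covering family.
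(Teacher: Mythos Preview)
Your proposal is correct and matches the paper's approach: the paper simply states that the proof ``follows that of lemma~7.3 in \cite{Shulman_10}'', and you have accurately unpacked this by reducing to the connected case via the coproduct decomposition $W \cong \coprod_i V_i$ afforded by local connectedness and cocompleteness, then invoking the local-character/descent property of the stack semantics (which is precisely the content of Shulman's lemma~7.3) along the jointly epic family of coproduct inclusions. Your identification of the $\vee$ and $\exists$ clauses as the nontrivial inductive steps, and of the disjointness of the summands as what makes the gluing canonical, is exactly right.
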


Here `locally connected cocomplete' is relative to a base topos $\set$ that is well-pointed (hence boolean) topos with natural number object (nno).
We will refer to the objects of $\set$ as `sets', but without an implication that these arise from a particular collection of axioms.
We will assume throughout that all toposes will come with an nno.

For a locally connected and cocomplete topos the statement of WISC translates, using definition \ref{def:stack_semantics} and applying lemma \ref{loc_conn_topos_forall}, into the stack semantics as follows:
\begin{equation}\label{int_WISC}
\begin{aligned}
  &\forall\ X\to U,\ U \text{ connected,} \\
  &\exists\ V\stackrel{p}{\onto} U,\ Y\to V,\\
  &\forall\ W\stackrel{q}{\to} V,\ W \text{ connected,}\ 
    Z\stackrel{g}{\onto}W\times_U X, \\
  &\exists\ T \stackrel{r}{\onto} W,\ T\times_V Y \xrightarrow{(\pr_1,l)} T\times_W Z, \\
  &\text{the map}\ T\times_V Y \xrightarrow{(\pr_1,l)} T\times_W Z \xrightarrow{r^*(g)} T\times_U X \text{ is an epi}.
\end{aligned}
\end{equation}

\noindent 
Note also that ``is an epi'' is a proposition whose statement in the stack semantics is equivalent to the external statement (see discussion around example 7.10 of \cite{Shulman_10}).
One does not need any knowledge of the stack semantics for the rest of this paper, and the uninitiated may choose to take (\ref{int_WISC}) as the \emph{definition} of WISC in the internal language of a locally connected cocomplete topos, and ignore the stack semantics entirely.

We will give a boolean $\set$-topos $E$ that is locally connected and cocomplete and in which the following statement, the negation of (\ref{int_WISC}), holds:
\begin{equation}\label{int_neg_WISC}
\begin{aligned}
  &\exists\ X\to U,\ U \text{ connected,} \\
  &\forall\ V\stackrel{p}{\onto} U,\ Y\to V,\\
  &\exists\ W\stackrel{q}{\to} V,\ W \text{ connected,}\ 
    Z\stackrel{g}{\onto}W\times_U X,\\
  &\forall\ T \stackrel{r}{\onto} W,\ T\times_V Y \xrightarrow{(\pr_1,l)} T\times_W Z, \\
  &\text{the map}\ T\times_V Y \xrightarrow{(\pr_1,l)} T\times_W Z \xrightarrow{r^*(g)} T\times_U X \text{ is not epi}.
\end{aligned}
\end{equation}

We denote the natural number object of $E$ by $\NN_d$, which is given by the image of the nno $\NN$ of $\set$ under the inverse image part of the geometric morphism $E\to \set$. 

\begin{proposition}\label{prop:implies_neg_WISC}
  In a connected, locally connected cocomplete topos $E$ such that $\pi_0$ reflects epimorphisms, the statement 
  \begin{equation}\label{neg_WISC_simple}
    \begin{aligned}
    &\forall\ Y\onto V,\ V \text{ connected,} \\
    &\exists\ \Omega \onto \NN_d \text{ with } \pi_0(\Omega) \simeq \pi_0(\NN_d),\\
    &\forall\ T \onto V,\ T \text{ connected, } T\times_V Y \xrightarrow{l} \Omega, \\
    & l \text{ is not epi}.
    \end{aligned}
  \end{equation}
  implies (\ref{int_neg_WISC}), the negation of WISC in the internal language of $E$.
\end{proposition}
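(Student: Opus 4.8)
The plan is to exhibit explicit witnesses for the existential quantifiers of (\ref{int_neg_WISC}) and to verify its two universal clauses using (\ref{neg_WISC_simple}) together with the hypothesis that $\pi_0$ reflects epimorphisms. First I would take $U := 1$, which is connected since $E$ is connected, and $X := \NN_d \to 1$; this discharges the outer clause $\exists\,X\to U$ with $U$ connected. It then remains, for an \emph{arbitrary} $V \onto 1$ and $Y\to V$, to produce a connected $W\to V$ and an epimorphism $Z\onto W\times_U X = W\times\NN_d$ making the last two clauses hold. Since $E$ is locally connected and $V\onto 1$ forces $V\neq 0$, I would write $V$ as a coproduct of connected objects and let $W\into V$ be one connected component; set $Y_W := W\times_V Y$, so that $T\times_V Y\cong T\times_W Y_W$ for any $T\to W$.

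Next I would split on whether $Y_W\onto W$ is an epimorphism. If it is not, let $W'\subsetneq W$ be its image; then for \emph{every} $T\onto W$ the first projection $T\times_W Y_W\to T$ factors through the proper subobject $T\times_W W'\subsetneq T$ (proper because, $T\onto W$ being epi, its pullback along the nonzero complement $W\setminus W'$ is nonzero), so the composite lands in $(T\times_W W')\times\NN_d\subsetneq T\times\NN_d$ and is never epi; here one may simply take $Z:=W\times\NN_d$. If instead $Y_W\onto W$, apply (\ref{neg_WISC_simple}) to the connected object $W$ and this epimorphism, obtaining $\Omega\onto\NN_d$ with $\pi_0(\Omega)\simeq\pi_0(\NN_d)$ enjoying the stated non-epi property against connected covers; set $Z:=W\times\Omega$ with $g:=\id_W\times(\Omega\onto\NN_d)$, so that $Z\onto W\times\NN_d$ and, for $r\colon T\to W$, one has $T\times_W Z\cong T\times\Omega$ and $r^*(g)\cong\id_T\times(\Omega\onto\NN_d)$.

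The heart of the argument is the following computation in the epi case. Using that coproducts are universal, $\NN_d\cong\coprod_n 1$ gives $T\times\NN_d\cong\coprod_n T$, and $\Omega\onto\NN_d$ decomposes $\Omega\cong\coprod_n\Omega_n$; since $\pi_0$ preserves coproducts and $\pi_0(\NN_d)\cong\NN$ (as $\NN_d$ is the image of $\NN$ under the inverse image functor and $E$ is connected), the isomorphism $\pi_0(\Omega)\simeq\pi_0(\NN_d)$ forces each fibre $\Omega_n$ to be connected. For connected $T$ the composite $r^*(g)\circ(\pr_1,l)$ is then $\coprod_n\bigl(\pr_1\colon L_n\to T\bigr)$ where $L_n:=l^{-1}(\Omega_n)$, so it is epi iff every $\pr_1\colon L_n\to T$ is epi. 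Were it epi, each $L_n\neq 0$ (since $T\neq 0$), hence $\pi_0(L_n)\to\pi_0(\Omega_n)=1$ is epi, and because $\pi_0$ \emph{reflects} epimorphisms $l|_{L_n}\colon L_n\to\Omega_n$ would be epi for all $n$; thus $l$ itself would be epi, contradicting (\ref{neg_WISC_simple}). Contrapositively, the composite is not epi.

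The step I expect to be the main obstacle is passing from the \emph{connected} covers $T\onto W$ that (\ref{neg_WISC_simple}) directly controls to the \emph{arbitrary} covers $T\onto W$ quantified in (\ref{int_neg_WISC}). My plan is to reduce to the connected case by local connectedness: decomposing $T=\coprod_j T_j$ makes the composite a coproduct of the component composites with pairwise disjoint codomains $T_j\times\NN_d$, so it is epi iff each component composite is, and lemma \ref{loc_conn_topos_forall} lets one test the ambient universal on connected stages. The delicate bookkeeping is to ensure the surviving components still present a connected cover of $W$ to which (\ref{neg_WISC_simple}) genuinely applies; this is precisely where connectedness of $W$ and the local connectedness and cocompleteness of $E$ are used in an essential way, and it is the point that will require the most care.
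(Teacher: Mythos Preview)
Your proof is correct and follows essentially the same route as the paper: take $U=1$ and $X=\NN_d$, choose a connected component $W\hookrightarrow V$, split on whether $Y_W:=W\times_V Y$ is initial (under the hypotheses this is equivalent to your case split on whether $Y_W\to W$ is epi, since $\pi_0(W)=1$ and $\pi_0$ reflects epis), and in the nontrivial case set $Z=W\times\Omega$ with $\Omega$ supplied by (\ref{neg_WISC_simple}). The contrapositive $\pi_0$-argument you give for connected $T$ is exactly the paper's, phrased in the other direction.

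Your flagged ``main obstacle'' dissolves, and lemma~\ref{loc_conn_topos_forall} is not the right tool for it anyway (that lemma concerns the stack-semantics $\forall$, whereas (\ref{int_neg_WISC}) is already the external unpacking). The point is simply that since $W$ is connected and $\pi_0$ reflects epimorphisms, \emph{every} connected component $T_j$ of $T$ has $\pi_0(T_j)\to\pi_0(W)=1$ surjective and hence $T_j\onto W$; so (\ref{neg_WISC_simple}) applies to each $T_j$. The paper makes this even lighter: rather than decomposing all of $T$, it picks a single component $T_0\hookrightarrow T$, shows the summand $T_0\times_V Y\to T_0\times\NN_d$ is not epi, and concludes immediately since the codomain $T\times\NN_d$ splits as $\coprod_j T_j\times\NN_d$.
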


\begin{proof}
  We give some facts about toposes that we will use in what follows.
  First, in a connected topos the terminal object is connected.
  Second, in a cocomplete topos one has infinitary extensivity, namely $A\times_B \coprod_{i\in I} C_i \simeq \coprod_{i\in I} A\times_B C_i$, and the initial object $0$ is \emph{strict}: any map to it is an isomorphism.
  Third, since $\pi_0$ is a left adjoint, it preserves epimorphisms.
  Combined with the hypothesis on $\pi_0$ this means a map $f$ in $E$ is an epimorphism if and only if $\pi_0(f)$ is an epimorphism. Similarly $\pi_0$ preserves initial objects and the hypotheses imply it also reflects initial objects.

  Now assume that (\ref{neg_WISC_simple}) holds in $E$. In (\ref{int_neg_WISC}) take $X\to U$ to be $\NN_d \to 1$ (using $1$ is connected). 
  Given an epimorphism $V\onto 1$, $V$ has a component as $\pi_0(V)\to 1$ is onto and $V = \coprod_{v\in\pi_0(V)} V_v$ (and $1$ is projective).
  Fix a component $V_0 \into V$.

  Given any $Y\to V$, take $Y_0 = V_0 \times_V Y$ to get $Y_0 \to V_0$. 
  If $Y_0$ is initial, then (\ref{int_neg_WISC}) can be seen to hold by taking $W = V_0$ and $g=\id$ since $T\times_V Y = T\times_{V_0} Y_0 = 0$ and as $r$ is an epi and $W$ is connected, $T\times \NN_d$ is not initial. 
  
  Hence we can assume $Y_0$ is not initial, and hence has at least one component and so $Y_0 \to V_0$ is an epi.
  Fix some $\Omega \onto \NN_d$ inducing an isomorphism $\pi_0(\Omega) \simeq \pi_0(\NN_d)$ such that the rest of (\ref{neg_WISC_simple}) holds.
  In (\ref{int_neg_WISC}) take $q$ to be the inclusion $V_0 \into V$ (hence $W=V_0$, which is connected), and $Z = V_0 \times \Omega$ with the epimorphism $g$ the product of $\id_{V_0}$ and $\Omega \onto \NN_d$.
  
  Now take any $T$ and pair of maps $T\onto V_0$ and $T\times_V Y = T\times_{V_0} Y_0 \xrightarrow{(\pr_1,l)} T\times_{V_0} Z = T \times \Omega$.
  We know that $T$ has a component by a similar argument to above, say $T_0 \into T$. 
  Then $T_0 \to V_0$ is epi so (\ref{neg_WISC_simple}) implies $T_0\times_{V_0} Y_0 = T_0 \times_V Y \to \Omega$ is not epi.
  This then implies $T_0 \times_V Y \to \Omega \to \NN_d$ is not epi, since if it were, $\pi_0(T_0\times_V Y) \to \pi_0(\Omega) \xrightarrow{\sim} \pi_0(\NN_d)$ would be epi, implying $\pi_0(T_0\times_V Y) \to \pi_0(\Omega)$ and hence $T_0 \times_V Y \to \Omega$ was epi. Thus there is some component of $\NN_d$ not in the image of this map, say indexed by $n\in \NN$.

 Then $T_0\times_V Y\to T_0\times \NN_d$ is not epi, as the component of $T_0\times \NN_d$ indexed by $n$ (isomorphic to $T_0$, which has $T_0 \to 1$ epi) is not in its image. It then follows that $T \times_V Y \to T\times \NN_d$ is not epi, and so (\ref{int_neg_WISC}) holds.
\end{proof}

\section{The construction}\label{sec:construction}

Given our base topos $\set$, we can consider the category of objects in $\set$
equipped with a linear order with no infinite descending chains, which we
shall call ordinals, in analogy with material set theory. 
The usual Burali-Forti argument---which requires no Choice---tells us there is a large category $O$ with objects ordinals and arrows the order-preserving injections onto initial segments. 
This large category is a linear preorder and has no infinite strictly descending chains. 
That there are multiple representatives for a particular order type, that is, non-identical isomorphic ordinals, does not cause any problems. 
We also note that $O$ has small joins (defined up to isomorphism in $O$).

Given a topological group $G$, the category of sets with a continuous $G$ action forms a cocomplete boolean topos $G\set$. In practice, one specifies a filter $\cF$ of subgroups of $G$ and then those $G$-sets all of whose stabiliser groups belong to $\cF$ are precisely those with a continuous action for the topology generated by $\cF$. 

For any group $G$, let $\cC$ be a collection of finite-index subgroups closed under finite intersections. 
Then there is a filter $\cF_\cC$ with elements those subgroups $H \leq G$ containing a subgroup appearing in $\cC$ (we say the filter is \emph{generated} by $\cC$). 
The category of continuous $G$-sets is then a full subcategory of the category of $G$-sets with finite orbits. 
The internal hom $Y^X$ is given by taking the set $\set(X,Y)$ then retaining only those functions whose stabiliser under the $G$-action $f\mapsto g\cdot \left(f(g^{-1}\cdot -)\right)$ belongs to $\cF_\cC$. 
The subobject classifier is the two-element set with trivial $G$-action.

\begin{remark}\label{rem:nice_cover}
  Notice that every transitive $G$-set $X$ that is continuous with respect to the topology given by $\cF_\cC$ (all $G$-sets will be assumed continuous from now on) has an epimorphism from some $G/L$ where $L\in \cC$. This is because any stabiliser $\Stab(x) \in \cF_\cC$, $x\in X$, is assumed to contain an element of $\cC$.
\end{remark}

\begin{example}\label{eg:bounded_depth_subgroups}
  For $\alpha$ an ordinal, let $\ZZ^\alpha$ be the set of functions $\alpha \to \ZZ$, considered as a group by pointwise addition. 
  Consider functions $d\colon \alpha \to \NNplus = \{1,2,3,\ldots\}$ such that $d(i) \not= 1$ for only finitely many $i\in \alpha$, which we shall call \emph{local depth functions}. 
  Such a function defines a subgroup $d\ZZ := \prod_{i\in\alpha}d(i)\ZZ \leq \ZZ^\alpha$ of finite index. 
  The intersection of two such subgroups, given by $d_1$ and $d_2$, is given by the function $i\mapsto \lcm\{d_1(i)d_2(i)\}$. 
  The subgroups belonging to the filter generated by this collection will be called \emph{bounded depth subgroups}. From now on $\ZZ^\alpha$ will be regarded as having the topology generated by this filter.
\end{example}

If we are given a split open surjection $p\colon H \to G$ (with $p$ and its splitting continuous) there is a geometric morphism $(p^*\dashv p_*)\colon H\set \to G\set$ with $p^*$ fully faithful and possessing a left adjoint $p_! \dashv p^*$. 
Here $p^*$ sends a $G$-set to the same set with the $H$-action via $p$ and $p_!(X) = X/\ker(p)$ with the obvious $G$-action.
The inverse image functor $p^*$ is in this case also a \emph{logical} functor, meaning that it preserves the subobject classifier and internal hom, as well as finite limits. 
In the case that $G$ is the trivial group: $p^*$ is denoted $(-)_d$ and sends a set to the same set with the trivial action; $p_!$ is denoted $\pi_0$ and $\pi_0(X)$ is the set of orbits of the $H$-action.

\begin{example}
  For $\alpha \into \beta$ ordinals, there is a split open surjection $\ZZ^\beta \to \ZZ^\alpha$, projection being given by restriction of the domain, and the splitting given by extending a function by $0$.
  Note that a local depth function on $\alpha$ gives a local depth function on $\beta$ by extending it by $1$.
\end{example}

Now consider a functor $\cG\colon O^{op}\to \Top\Grp_{sos}$, where $\Top\Grp_{sos}$ is the category of topological groups and split open surjections. 
Define the category $\cG\set$ with objects pairs $(\alpha,X)$ where $\alpha$ is an ordinal and $X$ is an object of $\cG(\alpha)\set$, and arrows $\cG\set((\alpha,X),(\beta,Y)) = \cG(\gamma)(X_\gamma,Y_\gamma)$ where $\gamma = \max\{\alpha,\beta\}$ and $X_\gamma,\ Y_\gamma$ are $X,Y$ considered as $\cG(\gamma)$-sets via the inverse image functors as above. 
The hom-sets are defined without making any choices since $O$ is a linear preorder, and so $\gamma$ is either $\alpha$ or $\beta$ (and we can take $\gamma = \alpha$ if $\alpha\simeq\beta$).
Composition is well defined due to the full faithfulness of the inverse image functors.
The objects of $\cG\set$ will be referred to as $\cG$-sets.
Informally, this category is the colimit of the large diagram of inverse image functors.

\begin{proposition}
  The category $\cG\set$ is a connected, locally connected, atomic and cocomplete boolean $\set$-topos. Moreover, $\pi_0$ reflects epimorphisms.
\end{proposition}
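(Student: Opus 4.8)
The plan is to recognise $\cG\set$ as the filtered colimit---a directed union along the fully faithful inverse image functors $p^*$---of the toposes $\cG(\alpha)\set$ indexed by the linear preorder $O$, and then to verify every clause by pushing the relevant finite or small data up to a common level and computing there. Two structural facts about the transition functors make this work. First, each $p^*$ is fully faithful and \emph{logical}, so it preserves finite limits, the subobject classifier, and internal homs. Second, each $p^*$ sits in a string of adjoints $p_!\dashv p^*\dashv p_*$, so as both a left and a right adjoint it preserves \emph{all} small limits and \emph{all} small colimits. Because $O$ is a linear preorder, any finite set of $\cG$-sets has a greatest level $\gamma=\max$, and because $O$ has small joins, any small family of $\cG$-sets has a level $\gamma=\bigvee$; in each case I transport all the objects up to $\cG(\gamma)\set$ via the appropriate $p^*$ and work inside a genuine topos.

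First I would establish the elementary topos structure. For a finite diagram I take $\gamma$ the maximum of its finitely many levels, form the finite limit in $\cG(\gamma)\set$, and observe that full faithfulness together with preservation of finite limits by the $p^*$ makes this the limit in $\cG\set$ and renders it independent of the chosen upper bound. Taking $\Omega$ to be the two-element trivial-action object (present and fixed at every level because the $p^*$ are logical) one gets $\mathrm{Sub}_{\cG\set}(\alpha,X)\cong\cG(\alpha)\set(X,\Omega)$, stable as the level rises, so $\Omega$ classifies subobjects; internal homs are computed levelwise for the same reason. Hence $\cG\set$ is an elementary topos, and it is boolean since $\Omega\cong 1+1$ at each level.

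Next, cocompleteness, which I expect to be the main obstacle. Given a small diagram $D\colon J\to\cG\set$, its objects occupy a small set of levels, which therefore has a join $\gamma$ in $O$; I push $D$ up to a diagram in the cocomplete topos $\cG(\gamma)\set$ and form its colimit $C$ there. To see $C$ is the colimit in $\cG\set$ I must test cocones into objects at \emph{higher} levels $\delta>\gamma$, and this is precisely where the second structural fact is indispensable: the transition functor $\cG(\gamma)\set\to\cG(\delta)\set$ preserves colimits, so the pushed-up $C$ remains the colimit of the pushed-up diagram, and the universal property follows from the description of $\cG\set$-homs at level $\delta$. Without $p^*$ being simultaneously a left and a right adjoint, a colimit computed at one level would fail to be stable as the level rises, and the colimit in $\cG\set$ could not be read off levelwise at all. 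Infinitary extensivity and strictness of $0$ are then inherited levelwise.

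Finally the geometric properties. I set $\pi_0(\alpha,X)$ to be the set of $\cG(\alpha)$-orbits of $X$; this is well defined on $\cG\set$ because a split open surjection is in particular surjective, so each $p^*$ leaves the orbit set unchanged. The resulting adjunction $\pi_0\dashv(-)_d$ exhibits $\cG\set$ as locally connected over $\set$, and $\pi_0(1)=1$---the terminal object being a single orbit---gives connectedness. Atomicity holds because every $\cG$-set is the coproduct of its orbits, each of which is an atom since a transitive action has no nontrivial subobjects, and this decomposition is preserved by the $p^*$. For the last clause, suppose $\pi_0(f)$ is epi for a map $f\colon X\to Y$ of $\cG$-sets at a common level; surjectivity on orbits means every orbit $O\subseteq Y$ meets the image, and as $f$ is equivariant and $O$ is transitive the image already exhausts $O$, so $f$ is surjective, i.e.\ epi. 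Thus $\pi_0$ reflects epimorphisms.
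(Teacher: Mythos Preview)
Your proof is correct and follows essentially the same strategy as the paper: compute finite limits, internal homs, and the subobject classifier at a common upper level using that the transition functors are fully faithful and logical; compute small colimits at a small join of levels using that $p^*$ is a left adjoint; define $\pi_0$ as orbits to get local connectedness; and argue that $\pi_0$ reflects epis via transitivity of orbits. The only cosmetic difference is that the paper deduces atomicity from the fact that $(-)_d\colon\set\to\cG\set$ is logical (the geometric-morphism definition of atomic), whereas you use the equivalent characterisation that every object decomposes as a coproduct of atoms.
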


\begin{proof}
  Let us first show that we have a topos. 
  Finite limits exist because they can be calculated in any $\cG(\alpha)$ where $\alpha$ is greater than all ordinals appearing in the objects in the diagram, and when the universal property is checked in $\cG(\beta)$ for $\beta > \alpha$, the limit is preserved by the inverse image functor.
  Likewise the internal hom $(\alpha,X)^{(\beta,Y)}$ is defined as $X_\gamma^{Y_\gamma}$ in $\cG(\gamma)$ ($\gamma = \max\{\alpha,\beta\}$) and its universal property is satisfied due to inverse image functors preserving internal homs.
  The subobject classifier $\mathbf{2}$ in $\set$ is preserved by all inverse image functors $\set \to \cG(\alpha)\set$, so given any subobject in $\cG\set$ it has a classifying map to $\mathbf{2}$.
  Thus $\cG\set$ is a topos, and has a geometric morphism $((-)_d \dashv (-)^\cG)\colon \cG\set \to \set$ as it is locally small ($(-)^\cG := \cG\set(1,-)$ is the global points functor).
  It is easy to check there is a functor $\pi_0$ sending a $\cG(\alpha)$-set to its set of orbits and this is a left adjoint to $(-)_d$. 
  Thus $\cG\set$ is locally connected.
  Since $(-)_d$ is fully faithful and logical $\cG\set$ is also connected and atomic respectively.
  Small colimits can be calculated in $\cG(\alpha)$ where $\alpha$ is some small join of the ordinals appearing as the vertices of the diagram, and the universal property is verified since inverse image functors preserve all small colimits.
  Lastly, $\cG\set$ is boolean as $1 \to \mathbf{2} \leftarrow 1$ is a coproduct cocone, using the definition of colimits and the fact it is such in $\set$.

  To prove the last statement, suppose $X \to Y$ in $\cG\set$ (without loss of generality, take this in $\cG(\alpha)\set$ for some $\alpha$) is such that $\pi_0$ induces an epimorphism of connected components.
  Then for each orbit of $Y$ there is an orbit of $X$ mapping to it, and equivariant maps between orbits are onto, so $X\to Y$ is onto as a map of sets and hence an epi.
\end{proof}

The stack semantics in $\cG\set$ give a model of the structural set theory underlying $\set$, minus any Choice that may hold in $\set$ (see the discussion after lemma 7.13 in \cite{Shulman_10}). 
We will take a particular diagram of groups with the properties we need.

\begin{corollary}
  The diagram $\ZON\colon \alpha \mapsto \ZZ^\alpha$, where $\ZZ^\alpha$ is regarding as having the topology given by the filter of bounded depth subgroups, gives rise to a connected, locally connected boolean topos $\ZON\set$ such that $\pi_0$ reflects epimorphisms.
\end{corollary}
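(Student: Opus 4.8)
The plan is to read this corollary as an immediate application of the preceding proposition, once we have checked that $\ZON$ genuinely defines a functor $O^{op}\to \Top\Grp_{sos}$. Indeed, that proposition establishes \emph{all} the asserted properties (and more: atomicity and cocompleteness) for $\cG\set$ attached to any such functor $\cG$, so the only work left is to verify that the assignment $\alpha\mapsto\ZZ^\alpha$ extends to a functor with the stated source and target. I would first dispose of the object assignment: for each ordinal $\alpha$, the subgroups $d\ZZ=\prod_{i\in\alpha}d(i)\ZZ$ indexed by local depth functions $d$ form a collection of finite-index subgroups closed under finite intersection (by the $\lcm$ formula of example \ref{eg:bounded_depth_subgroups}), hence generate a filter and so a group topology on $\ZZ^\alpha$ for which these subgroups are a neighbourhood basis of $0$; thus $\ZON(\alpha)=\ZZ^\alpha$ is a topological group.

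For morphisms, a morphism $\alpha\to\beta$ in $O$ is an order injection onto an initial segment, so $\alpha\le\beta$, and I would send the corresponding morphism of $O^{op}$ to the restriction homomorphism $r\colon\ZZ^\beta\to\ZZ^\alpha$, $f\mapsto f|_\alpha$, equipped with the continuous section $s\colon\ZZ^\alpha\to\ZZ^\beta$ given by extension by $0$. The substance of the argument is to confirm that $r$ is a split open surjection \emph{relative to the bounded-depth topologies}. Writing $\bar d$ for the extension of a local depth function $d$ on $\alpha$ by the value $1$ on $\beta\setminus\alpha$, a direct computation on basic open subgroups gives $r^{-1}(d\ZZ)=\bar d\ZZ$ and $s^{-1}(e\ZZ)=(e|_\alpha)\ZZ$, so both $r$ and $s$ are continuous; and $r(e\ZZ)=(e|_\alpha)\ZZ$ shows that $r$ carries basic open subgroups to open subgroups, whence $r$ is open (a continuous homomorphism is open once it does this to a neighbourhood basis of $0$). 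Finally $r$ is plainly surjective and $r\circ s=\id$. Here $e|_\alpha$ and $\bar d$ are themselves local depth functions precisely because each of $d,e$ takes a value different from $1$ at only finitely many places.

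Functoriality I expect to be immediate: restriction of functions is strictly functorial (identities to identities, and the composite of restrictions along $\alpha\le\beta\le\gamma$ is the restriction along $\alpha\le\gamma$), and since $O$ is a linear preorder there are no further coherence conditions to impose and no choices to be made, even in the presence of distinct isomorphic ordinals. With $\ZON\colon O^{op}\to\Top\Grp_{sos}$ thus in hand, the preceding proposition applies verbatim and yields that $\ZON\set$ is a connected, locally connected, boolean $\set$-topos in which $\pi_0$ reflects epimorphisms, as claimed. The only step demanding genuine (if modest) verification—and hence the main obstacle—is the continuity and openness of $r$ for the bounded-depth topologies, that is, the explicit identification of the preimages and images of the basic open subgroups recorded above.
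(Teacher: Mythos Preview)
Your proposal is correct and takes essentially the same approach as the paper: the corollary is stated there without proof, as an immediate consequence of the preceding proposition applied to $\ZON$, with the split open surjection $\ZZ^\beta\to\ZZ^\alpha$ already recorded in the example just before the definition of $\cG\set$. You have simply made explicit the topological verifications (continuity and openness of $r$ and $s$ with respect to the bounded-depth filters) that the paper leaves to the reader.
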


If one is working in a setting that permits such reasoning, the proper class-sized group to which the introduction alludes is the colimit over the inclusions $\ZON(\alpha) \into \ZON(\beta)$ given by the splittings, for $\alpha \into \beta$.
The rest of the paper will show that internal WISC fails in $\ZON\set$, and so WISC itself fails in the well-pointed topos given by the stack semantics of $\ZON\set$.

\section{The failure of WISC}\label{sec:failure_of_WISC}

We need some facts that hold in $\ZON\set$ regarding local depth functions. As a bit of notation, let us write $\ZON/d\ZZ$ for the transitive $\ZON$-set $\ZZ^\alpha/d\ZZ$ for $\alpha = \dom(d)$.

\begin{lemma}\label{local_depth_bounds}
  Let $\ZON/d_1\ZZ \to \ZON/d_2\ZZ$ be an equivariant map of $\ZON$-sets. 
  Then for every $i \in \alpha$ we have $d_2(i) \mid d_1(i)$.
\end{lemma}

\begin{proof}
  The existence of the map implies $d_1\ZZ$ is conjugate to a subgroup of $d_2\ZZ$, but all groups here are abelian so it \emph{is} a subgroup of $d_2\ZZ$.
  For the second statement, notice that the first statement implies $d_1(i)\ZZ \leq d_2(i)\ZZ \leq \ZZ$ for each $i\in \alpha$ and the result follows.
\end{proof}

We also need to consider what taking pullbacks looks like from the point of view of local depth functions.

\begin{lemma}\label{depth_of_fibred_product}
  Any orbit in
  \[
    \ZON/(d_1\ZZ \cap d_2\ZZ) \subset \ZON/d_1\ZZ \times_{\ZON/d_3\ZZ} \ZON/d_2\ZZ
  \]
  is isomorphic to a transitive $\ZON$-set with local depth function $d$ given by
  \[
    d(i) = \lcm\{d_1(i),d_2(i)\},\quad \forall i \in \alpha
  \]
  where $\alpha = \max\{\dom(d_1),\dom(d_2)\}$.
\end{lemma}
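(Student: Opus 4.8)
The plan is to compute the pullback $\ZON/d_1\ZZ \times_{\ZON/d_3\ZZ} \ZON/d_2\ZZ$ explicitly as a $\ZON$-set and identify the local depth function of each of its orbits, following the standard description of fibred products of transitive $G$-sets in terms of cosets. First I would work in $\ZON(\gamma)\set$ for $\gamma = \max\{\dom(d_1),\dom(d_2),\dom(d_3)\}$, extending all three depth functions to $\gamma$ by $1$ so that the three subgroups $d_1\ZZ, d_2\ZZ, d_3\ZZ$ all sit inside the single abelian group $\ZZ^\gamma$. Since all groups in sight are abelian and the maps $\ZON/d_1\ZZ \to \ZON/d_3\ZZ$ and $\ZON/d_2\ZZ \to \ZON/d_3\ZZ$ force $d_1\ZZ, d_2\ZZ \leq d_3\ZZ$ (by lemma \ref{local_depth_bounds}), these canonical quotient maps are the ones inducing the pullback.

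Next I would use the general fact that for a group $G$ with subgroups $H, K \leq N$, the fibred product $G/H \times_{G/N} G/K$ decomposes into $G$-orbits indexed by the double cosets $H\backslash N / K$, with the orbit corresponding to a representative $n\in N$ being isomorphic to $G/(H \cap nKn^{-1})$. Because everything is abelian here, conjugation is trivial, so \emph{every} orbit is isomorphic to $G/(H\cap K) = \ZON/(d_1\ZZ \cap d_2\ZZ)$; this already explains the distinguished sub-$\ZON$-set appearing in the statement. The key computation is then simply that the intersection $d_1\ZZ \cap d_2\ZZ$ is, coordinatewise, the subgroup generated by the least common multiple: for each $i\in\alpha$ one has $d_1(i)\ZZ \cap d_2(i)\ZZ = \lcm\{d_1(i),d_2(i)\}\ZZ$ inside $\ZZ$, and since the subgroups are products over the coordinates, the intersection is $\prod_i \lcm\{d_1(i),d_2(i)\}\ZZ$. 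This exhibits $d_1\ZZ \cap d_2\ZZ = d\ZZ$ for $d(i) = \lcm\{d_1(i),d_2(i)\}$, which is the claimed local depth function.

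I expect the main obstacle to be bookkeeping rather than genuine difficulty: one must be careful that $d$ is indeed a \emph{local depth function} (i.e.\ $d(i) \neq 1$ for only finitely many $i$), which follows since the exceptional sets for $d_1$ and $d_2$ are finite and $\lcm\{d_1(i),d_2(i)\} = 1$ exactly when both $d_1(i)=d_2(i)=1$. The only subtle point worth stating carefully is the identification of the orbit as $\ZON/(d_1\ZZ\cap d_2\ZZ)$ rather than some larger transitive set; this is precisely where abelianness is used, collapsing all the double-coset orbits to the same isomorphism type. Since the statement only asserts that each orbit \emph{is} isomorphic to such a transitive $\ZON$-set with the stated depth function, it suffices to verify the stabiliser of a chosen point of each orbit equals $d_1\ZZ\cap d_2\ZZ$, which the double-coset description supplies directly.
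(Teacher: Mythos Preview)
Your proposal is correct. The paper's proof takes a slightly different route: rather than invoking the general double-coset decomposition of $G/H \times_{G/N} G/K$, it first observes that the fibred product decomposes \emph{coordinatewise} as $\prod_{i\in\alpha} \ZZ/d_1(i)\ZZ \times_{\ZZ/d_3(i)\ZZ} \ZZ/d_2(i)\ZZ$, with the $i$th copy of $\ZZ$ in $\ZZ^\alpha$ acting diagonally on the $i$th factor, and then computes the stabiliser of an arbitrary point $(n_i,n_i')_{i\in\alpha}$ as the product of the coordinatewise stabilisers. This reduces the question to the elementary case of a single $\ZZ$-set $\ZZ/k\ZZ \times_{\ZZ/m\ZZ} \ZZ/l\ZZ$, where the stabiliser of any $(0,n)$ is $k\ZZ \cap l\ZZ$ by abelianness. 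Your approach packages the same stabiliser computation via the double-coset formula and uses abelianness once globally to collapse all orbits to $G/(H\cap K)$; this is more conceptual and buys you the bookkeeping check that $d$ is again a local depth function almost for free, while the paper's argument is more hands-on and avoids quoting an external lemma. Both land on the same key identity $d_1\ZZ \cap d_2\ZZ = \prod_{i} \lcm\{d_1(i),d_2(i)\}\ZZ$, which is exactly the description of intersections from example~\ref{eg:bounded_depth_subgroups}.
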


\begin{proof}
Notice that the fibred product as given is isomorphic to 
\[
  \prod_{i\in \alpha} \ZZ/d_1(i)\ZZ \times_{\ZZ/d_3(i)\ZZ} \ZZ/d_2(i)\ZZ
\]
where the $\ZZ^\alpha$ action is such that the $i^{th}$ coordinate---a copy of $\ZZ$---acts diagonally on the $i^{th}$ factor of the preceeding expression.
The stabiliser of any $(n_i,n'_i)_{i\in \alpha}$ is then the product of the stabilisers of the $\ZZ$-action of the various $\ZZ/d_1(i)\ZZ \times_{\ZZ/d_3(i)\ZZ} \ZZ/d_2(i)\ZZ$.
We thus only need to consider the simpler problem of determining the stabilisers for a $\ZZ$-set $\ZZ/k\ZZ \times_{\ZZ/m\ZZ} \ZZ/l\ZZ$.

The stabiliser of $(0,0)$ is $\ZZ/(k\ZZ\cap l\ZZ)$, from which the result follows by the description in example \ref{eg:bounded_depth_subgroups} of the intersection of subgroups given by local depth functions.
We only then need to consider the stabilisers of $(0,n)$ for $n\in \ZZ/l\ZZ$ as all others are equal to one of these by abelianness -- but $\Stab(0,n)$ is again $\ZZ/(k\ZZ\cap l\ZZ)$ using abelianness.
The statement regarding local depth functions then follows.
\end{proof}

We need a special collection of subgroups of $\ZZ^\alpha$ in the proof of theorem \ref{WISC_fails_in_Zset} below, namely those given by local depth functions $\delta[\alpha,n,i]\colon \alpha\to \NNplus$ defined as
\[
  \delta[\alpha,n,i](k) = 
    \begin{cases}
      n & \text{if $k = i$;}\\
      1 & \text{if $k \not= i$.}
    \end{cases}
\]
Note that the transitive $\ZON$-set $\ZON/\delta[\alpha,n,i]\ZZ$ has underlying set $\ZZ/n\ZZ$, and that $\Omega[\alpha,i] := \coprod_{n\in \NNplus} \ZON/\delta[\alpha,n,i]\ZZ$ is an object of $\ZON\set$ for any $\alpha\in O$ and $i\in\alpha$.

\begin{theorem}\label{WISC_fails_in_Zset}
  The statement of WISC in the stack semantics in $\ZON\set$ fails.
\end{theorem}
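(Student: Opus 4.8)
The plan is to invoke Proposition~\ref{prop:implies_neg_WISC}: since $\ZON\set$ is connected, locally connected, cocomplete and boolean with $\pi_0$ reflecting epimorphisms, it suffices to verify the concrete statement (\ref{neg_WISC_simple}). Recall that the connected objects of $\ZON\set$ are exactly the transitive $\ZON$-sets, so throughout $V$, $T$ and the components in play are single orbits. The object $\Omega$ will be taken to be one of the $\Omega[\alpha,i] = \coprod_{n\in\NNplus}\ZON/\delta[\alpha,n,i]\ZZ$ already introduced, whose whole point is that it carries components of unboundedly large depth concentrated at the single coordinate $i$.

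Given a surjection $Y\onto V$ with $V$ connected, I would first realise both $Y$ and $V$ inside $\ZON(\beta)\set$ for some ordinal $\beta$, and then choose a \emph{fresh} coordinate: put $\alpha = \beta+1$ and let $i=\beta\in\alpha$ be the new top index, so that the $i$-th coordinate copy of $\ZZ$ in $\ZZ^\gamma$ acts trivially on the inverse images of $Y$ and $V$ for every $\gamma\geq\alpha$. Set $\Omega = \Omega[\alpha,i]$. The required surjection $\Omega\onto\NN_d$ sends the orbit $\ZON/\delta[\alpha,n,i]\ZZ$ to a point via a fixed bijection $\NNplus\simeq\NN$; it is epi because each summand is nonempty, and since each summand is a single orbit its effect on $\pi_0$ is exactly that bijection, giving $\pi_0(\Omega)\simeq\pi_0(\NN_d)$.

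The heart of the argument is a local-depth computation at the fresh coordinate $i$. Given any connected $T\onto V$ and any map $l\colon T\times_V Y\to\Omega[\alpha,i]$, I claim that every orbit of $T\times_V Y$ has depth exactly $d_T(i)$ at $i$, where $d_T(i)$ is the finite depth of $T$ at $i$. Indeed, because the $i$-th copy of $\ZZ$ acts trivially on $Y$ and $V$, the stabiliser in that copy of a point $(t,y)$ is just the stabiliser of $t$, namely $d_T(i)\ZZ$; equivalently one applies Lemma~\ref{depth_of_fibred_product} with the $Y$-side depth equal to $1$ at $i$. For a general connected $T$ one may first pull back along a cover $\ZON/d_T\ZZ\onto T$ supplied by Remark~\ref{rem:nice_cover}, using that epimorphisms are stable under the base change $(-)\times_V Y$, so it is enough to treat $T=\ZON/d_T\ZZ$. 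Each orbit of $T\times_V Y$ is transitive, so $l$ carries it into a single component $\ZON/\delta[\alpha,n,i]\ZZ$; by Lemma~\ref{local_depth_bounds} this forces $n\mid d_T(i)$, hence $n\leq d_T(i)$.

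Consequently the image of $l$ meets only the finitely many components indexed by divisors of $d_T(i)$; in particular the component $n=d_T(i)+1$ is disjoint from the image, so $l$ is not surjective and hence not an epimorphism. As $T$ and $l$ were arbitrary, (\ref{neg_WISC_simple}) holds, and Proposition~\ref{prop:implies_neg_WISC} then yields (\ref{int_neg_WISC}), i.e.\ internal WISC fails. I expect the only delicate point to be the bookkeeping around the fresh coordinate: one must check that passing $Y$, $V$ and $T$ into a common large slice $\ZON(\gamma)\set$ leaves the $i$-th copy of $\ZZ$ acting trivially on $Y$ and $V$, and that ``depth at $i$'' is genuinely independent of the orbit of $T\times_V Y$. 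Everything else is a direct application of Lemmas~\ref{local_depth_bounds} and~\ref{depth_of_fibred_product}.
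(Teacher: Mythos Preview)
Your proposal is correct and follows the paper's argument essentially step for step: both reduce via Proposition~\ref{prop:implies_neg_WISC}, realise $Y$ and $V$ in some $\ZZ^\beta\set$, take $\Omega$ concentrated at the fresh coordinate $\top_{\beta+1}$, and use Lemmas~\ref{local_depth_bounds} and~\ref{depth_of_fibred_product} to see that the depth at that coordinate depends only on $T$, so the image of any $l$ lies in the finitely many summands indexed by divisors of $d_T(i)$. The only cosmetic difference is that the paper explicitly covers each component $Y_y$ (and $H$, $K$) by transitive $\ZON$-sets of the form $\ZON/d\ZZ$ before invoking Lemma~\ref{depth_of_fibred_product}, whereas you argue the stabiliser at the fresh coordinate directly from the fact that the $i$-th copy of $\ZZ$ acts trivially on $Y$ and $V$.
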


\begin{proof}
In the notation of proposition \ref{prop:implies_neg_WISC}, taking transitive $\ZON$-sets for connected objects, we need to show that for any $Y\onto \ZON/H$, there is an $\Omega$ such that for any $r\colon \ZON/K \to \ZON/H$, any $l\colon \ZON/K\times_{\ZON/H} Y \to \Omega$ is not an epimorphism. 

Let us write $Y = \coprod_{y\in\pi_0(Y)} Y_y$, and note that this coproduct, like all colimits in $\ZON\set$ takes place in some $\ZZ^\alpha\set$.
In particular, by remark \ref{rem:nice_cover} each $Y_y$ has an epimorphism 
from some $\ZON/d_y\ZZ$ for a local depth function $d_y \colon \alpha \to \NNplus$. 
As a result $H \leq \ZZ^\alpha$, so fix some $d_H\colon \alpha \to \NNplus$ to get an epimorphism $\ZON/d_H\ZZ \to \ZON/H$.
Define $\Omega = \Omega[\alpha+1,\top_{\alpha+1}]$, where $\top_{\alpha+1}$ is the top element of the ordinal $\alpha+1$.
Given $\ZON/K \to \ZON/H$, fix a local depth function $d_K\colon \beta \to \NNplus$ such that $d_K\ZZ \leq K$ (without loss of generality, we can assume $\alpha \leq \beta$).

Since $\ZON\set$ is infinitary extensive, we have 
\[
  \ZON/K\times_{\ZON/H} Y \simeq \coprod_{y\in\pi_0(Y)} \ZON/K\times_{\ZON/H} Y_y.
\] 
Any map $l\colon \ZON/K\times_{\ZON/H} Y \to \Omega$ is then given by a collection of maps $l_y \colon \ZON/K\times_{\ZON/H} Y_y \to \Omega$.
We need to show that this collection of maps is not jointly surjective, and will do this by showing the image of $l_y$, for arbitrary $y$, must be contained in a strict subobject of $\Omega$ that is independent of $y$.

Given an epimorphism $\ZON/d_y\ZZ \to Y_y$, consider, in $\ZON/d_K\ZZ \times_{\ZON/d_H\ZZ} \ZON/d_y\ZZ$, an orbit $\ZON/\delta_y\ZZ$ where $\delta_y(i) = \lcm\{d_K(i),d_y(i)\}$ for each $i\in \beta$, by lemma \ref{depth_of_fibred_product}.
In particular, we have that $\delta_y(\top_{\alpha+1}) = d_K(\top_{\alpha+1}) =: N_0$ is independent of $y$.

Compose the inclusion $\ZON/\delta_y\ZZ \into \ZON/K \times_{\ZON/H} Y_y$with $l_y$ to get a map 
\[
  l'_y\colon \ZON/\delta_y\ZZ \to \Omega = \coprod_{n\in \NNplus} \ZON/\delta[\alpha,n,i]\ZZ.
\]
Applying lemma \ref{local_depth_bounds} to this map with $i = \top_{\alpha+1}$ we find that $n\mid N_0$ for any $n$ such that $\ZON/\delta[\alpha,n,i]\ZZ \subset \im l'_y$. Thus the image of any $l_y$ and hence of $l$ is contained in 
\[
  \coprod_{n\mid N_0} \ZON/\delta[\alpha,n,i]\ZZ \subsetneqq \Omega,
\]
hence $l$ is not an epimorphism.
\end{proof}

Recall that ETCS is a set theory defined by specifying the properties of the
category of sets \cite{Lawvere_64}, namely that it is a well-pointed topos
(with nno) satisfying the axiom of choice. We can likewise specify a
choiceless version, which is the theory of a well-pointed topos (with nno).
Given a model $\set$ of ETCS, we have constructed a well-pointed topos in
which WISC is false. Thus we have our main result.

\begin{corollary}
  Assuming ETCS is consistent, so is the theory of a well-pointed topos with nno
  plus the negation of WISC.
\end{corollary}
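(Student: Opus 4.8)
The plan is to assemble this as a relative consistency result from the pieces already in place. First I would fix, using the hypothesis $\mathrm{Con}(\text{ETCS})$, a model $\set$ of ETCS: a well-pointed topos with nno in which the axiom of choice holds. Working internally to this $\set$, I build the diagram $\ZON\colon\alpha\mapsto\ZZ^\alpha$ of the previous section, obtaining the connected, locally connected, cocomplete boolean $\set$-topos $\ZON\set$ for which $\pi_0$ reflects epimorphisms. The key external input is Shulman's theorem (the discussion following lemma~7.13 in \cite{Shulman_10}) that the stack semantics of such a topos soundly interprets the theory of a well-pointed topos with nno; that is, every axiom $\phi$ of this choiceless structural set theory satisfies $1\Vdash\phi$, and the interpretation respects the rules of (classical, since everything here is boolean) first-order deduction. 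Choice is precisely the axiom that need not be validated, which is what leaves room for $\neg$WISC.

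Next I would invoke Theorem~\ref{WISC_fails_in_Zset}, which states that WISC fails in the stack semantics of $\ZON\set$, i.e. $1\Vdash\neg$WISC. Together with the previous paragraph this shows that every axiom of the theory $T$ consisting of ``well-pointed topos with nno, together with $\neg$WISC'' is validated by the stack semantics of $\ZON\set$.

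To conclude consistency I would combine soundness with nontriviality of the interpreting topos. If $T$ proved a contradiction, then by soundness of the stack semantics we would have $1\Vdash\bot$, which by definition~\ref{def:stack_semantics} means $1\simeq 0$ in $\ZON\set$. But $\ZON\set$ is nondegenerate, since for instance $\NN_d$ is not an initial object, so $1\not\simeq 0$; this contradiction shows $T$ is consistent, giving the corollary.

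The hard part — really the only nonroutine point remaining once the theorem is granted — is verifying that the statement shown to fail in Theorem~\ref{WISC_fails_in_Zset}, which was phrased through the concrete translation~(\ref{int_neg_WISC}) and routed through proposition~\ref{prop:implies_neg_WISC}, is genuinely the negation of the WISC axiom as it occurs in the theory $T$. This is exactly what the translation of WISC into the stack semantics at the start of section~\ref{sec:failure_of_WISC}'s predecessor accomplishes, via lemma~\ref{loc_conn_topos_forall} and Dorais's equivalent formulation \cite{Dorais_MO}, so the substantive work is already done; the corollary is then the bookkeeping that aligns these semantic validity statements with the syntactic notion of consistency.
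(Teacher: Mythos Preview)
Your proposal is correct and follows essentially the same route as the paper: the paper states the corollary immediately after observing that a model of ETCS yields $\ZON\set$, whose stack semantics (by Shulman, discussed after lemma~7.13 of \cite{Shulman_10}) model a well-pointed topos with nno, and Theorem~\ref{WISC_fails_in_Zset} shows $\neg$WISC holds there. You have simply made explicit the soundness-and-nondegeneracy bookkeeping that the paper leaves to the reader.
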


Finally, we recall the definition from \cite{vandenBerg_12} of a predicative
topos: this is a \mbox{$\Pi W$-pretopos} satisfying WISC (or, as called there, AMC).

\begin{corollary}
  The topos $\ZON\set$ is not a predicative topos.
\end{corollary}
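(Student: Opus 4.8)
The plan is to read the non-predicativity of $\ZON\set$ directly off the failure of WISC already established in Theorem \ref{WISC_fails_in_Zset}, since a predicative topos is \emph{by definition} a $\Pi W$-pretopos satisfying WISC (the axiom called AMC in \cite{vandenBerg_12}). Because being a predicative topos is a conjunction of conditions, it suffices to exhibit the failure of any single one of them, and WISC is the clause over which we have direct control. So the first move is simply to record that every predicative topos satisfies WISC, whereas Theorem \ref{WISC_fails_in_Zset} asserts that the stack-semantics interpretation of WISC in $\ZON\set$ is false, i.e.\ that its negation (\ref{int_neg_WISC}) is valid. Granting that these two occurrences of ``WISC'' name the same statement, the corollary follows at once, with no need to verify the remaining $\Pi W$-pretopos axioms.

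The one point that genuinely requires care --- and the step I expect to be the main obstacle --- is reconciling the two presentations of the axiom. Van den Berg's definition demands AMC, an assertion in the internal logic of the pretopos, while Theorem \ref{WISC_fails_in_Zset} refutes the version of WISC transcribed into the stack semantics by way of Definition \ref{def:stack_semantics} and Lemma \ref{loc_conn_topos_forall}. I would argue that these coincide: AMC and WISC are provably equivalent (as recorded in the introductory remarks on \cite{vdBerg-Moerdijk_14}), and for an elementary topos the stack semantics is precisely the interpretation of such unbounded first-order statements, so the sentence refuted in (\ref{int_neg_WISC}) is exactly the internal WISC/AMC figuring in the definition of a predicative topos. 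Here one must be explicit that the quantifiers over ``all covers'' are read over the slices and generalized elements appearing in the stack semantics rather than over global points; this is the very feature that lets the non-bounded topos $\ZON\set$ escape the theorem of \cite{vdBerg-Moerdijk_14} that WISC holds in every Grothendieck topos.

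Finally, to make the example serve as the intended foil, I would observe that $\ZON\set$ satisfies every \emph{other} clause of the definition: it is a topos, hence a $\Pi$-pretopos, and it carries an nno, so W-types exist and it is a $\Pi W$-pretopos; thus WISC is the sole failing axiom. The corollary is then established, and moreover $\ZON\set$ isolates WISC as the precise obstruction to predicativity.
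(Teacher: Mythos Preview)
Your proposal is correct and matches the paper's approach: the paper states the corollary without proof, relying immediately on the definition of a predicative topos as a $\Pi W$-pretopos satisfying WISC together with Theorem~\ref{WISC_fails_in_Zset}. Your additional remarks---on the identification of AMC with WISC and on $\ZON\set$ being a $\Pi W$-pretopos so that WISC is the only failing clause---go beyond what the paper spells out but are entirely in its spirit.
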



\end{document}